\newcommand{\rk}[1]{{\rm rank}(#1)}
\newcommand{\dimn}[1]{{\rm dim}(#1)}
\newcommand{\Zn}{\mathbb{Z}_n}
\newcommand{\mb}{\mathbb}
\newcommand{\PsZn}{{\cal S}(\Zn)}
\newcommand{\inpr}[2]{\langle #1,#2\rangle}
\newtheorem{defn}{{\bf Definition}}[section]
\newtheorem{lemma}[defn]{{\bf Lemma}}
\newtheorem{theo}[defn]{{\bf Theorem}}
\newtheorem{remark}[defn]{{\bf Remark}}
\newtheorem{notation}[defn]{{\bf Notation}}
\newtheorem{qn}[defn]{{\bf Question}}
\author[1,*]{L. Sunil Chandran}
\author[1]{Deepak Rajendraprasad}
\author[2]{Nitin Singh}
\affil[1] {Department of Computer Science and Automation,}
\affil[2] {Department of Mathematics,}
\affil[ ] {Indian Institute of Science, Bangalore, India.}
\affil[*] {Corresponding author: sunil@csa.iisc.ernet.in}
\title{On Additive Combinatorics of Permutations of $\mb{Z}_n$.}
\date{}
\begin{document}
\maketitle
\begin{abstract}
Let $\mb{Z}_n$ denote the ring of integers modulo $n$.  In this paper
we consider two extremal problems on permutations of $\mb{Z}_n$, namely,
the maximum size of a collection of permutations such that the sum of any two
distinct permutations in the collection is again a permutation, and the
maximum size of a collection of permutations such that the sum of any two
distinct permutations in the collection 
is not a permutation. Let the sizes be denoted by $s(n)$ and $t(n)$
respectively. The case when $n$ is even is trivial in both the cases, with
$s(n)=1$ and $t(n)=n!$. For $n$ odd, we prove $s(n)\geq (n\phi(n))/2^k$
where $k$ is the number of distinct prime divisors of $n$. When $n$ is an
odd 
prime we prove $s(n)\leq \frac{e^2}{\pi} n ((n-1)/e)^\frac{n-1}{2}$. For the
second problem, we prove $2^{(n-1)/2}.(\frac{n-1}{2})!\leq t(n)\leq 2^k.(n-1)!/\phi(n)$ when
$n$ is odd.   
\end{abstract}\smallskip

\par\indent{\small {\em MSC 2010:\,} 11A05, 11A07, 11A41, 05E99, 05D05.}

\section{Introduction}
For $n\in \mb{Z}$, let $\mb{Z}_n$ denote the ring $\{0,\ldots,n-1\}$ with
$+$ and $.$ as addition and multiplication modulo $n$ respectively. Let $\PsZn$ denote the set of all permutations
of the set $\Zn$. We are interested in obtaining bounds on the maximum
size of a subset ${\cal P}$ of $\PsZn$ in the case when two distinct permutations
in ${\cal P}$ sum up to a permutation (Section \ref{sec:sumperm}), and in
the case when two distinct permutations in ${\cal P}$ sum up to a
non-permutation (Section \ref{sec:nonperm}). As far as we know the
problems considered above are new, though a similar looking problem for
difference of permutations is well studied, in the form of mutually
orthogonal {\em orthomorphisms} of finite groups. For the sake of
completeness, we discuss the connection between difference of permutations
problem with the orthomorphisms problem in Section \ref{sec:ortho}. The
families of permutations we consider have similarities to {\em reverse
free} and {\em reverse full} families of permutations as considered by
F\"{u}redi et al. \cite{ZF} and Cibulka \cite{cibulka}. 

\section{Preliminaries}
We review some well known facts and basic results from linear algebra,
elementary number theory and combinatorics which will be used in later
sections. A comprehensive reference for linear algebra notions discussed
below is \cite{babaifrankl}. 

\begin{defn}[Vector Spaces, Bilinear Forms, Inner Products]{\rm
Let $V$ be an $n$-dimensional vector space over the field $F$, which is naturally identified
with the set $F^n$, with component-wise addition and scalar multiplication.
A {\em bilinear form} on $V$ is a function
$f: V\times V\rightarrow F$ which is linear in both arguments, i.e,
$f(\alpha u+u',\beta v+v')=\alpha\beta f(u,v)+\alpha f(u,v')+\beta
f(u',v)+f(u',v')$ for $u,u',v,v'\in V$ and $\alpha,\beta\in F$. A {\em
symmetric } bilinear form is a bilinear form $f$ with $f(u,v)=f(v,u)$ for
all $u,v\in V$. We will call a symmetric bilinear form on $V$ as an {\em
inner product} on $V$. The inner product on $V$ given by
$\inpr{u}{v}=u_1v_1+\cdots+u_nv_n$ where, 
$u=(u_1,\ldots,u_n)$ and $v=(v_1,\ldots,v_n)$ will be called the {\em
standard} inner product on $V$.
}
\end{defn}

\begin{defn}[Totally isotropic subspaces]{\rm
Let $V$ be an $n$-dimensional vector space with the standard inner product
$\langle\:,\rangle$. We say that vectors $u,v\in V$ are {\em orthogonal},
denoted by $u\perp v$, if
$\langle u,v\rangle = 0$. For a subspace $S$ of $V$ and a vector $v\in V$, we
say $v\perp S$ if $v\perp u$ for all $u\in S$. For a subspace $S$ of $V$,
we define $S^{\perp} = \{v\in V: v\perp S\}$ which is also a subspace of
$V$. We call a subspace $S$ of $V$ {\em totally isotropic} if $S\subseteq
S^{\perp}$, or in other words, any two vectors in $S$ are orthogonal.
}
\end{defn}
We know the following fact from linear algebra.
\begin{lemma}\label{lem:lafact}
Let $V$ be an $n$-dimensional vector space with the standard inner product. Then for a subspace $S$ of $V$, we have
$\dimn{S}+\dimn{S^\perp}=n$. 
\end{lemma}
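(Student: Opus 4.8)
The plan is to avoid any appeal to a direct-sum decomposition $V = S \oplus S^\perp$, since over a general field $F$ the subspace $S$ may be totally isotropic and $S \cap S^\perp$ need not be trivial. Instead I would extract the statement directly from the rank--nullity theorem applied to a matrix built from a basis of $S$.

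Concretely, let $d = \dimn{S}$ and fix a basis $v_1,\ldots,v_d$ of $S$. Form the $d\times n$ matrix $A$ whose $i$-th row is $v_i$; since the $v_i$ are linearly independent, $\rk{A}=d$. Viewing $x\in V = F^n$ as a column vector, the $i$-th entry of $Ax$ is exactly $\inpr{v_i}{x}$, so $Ax=0$ if and only if $x$ is orthogonal to every $v_i$. Because the $v_i$ span $S$ and the standard inner product is linear (and symmetric), this is equivalent to $x\perp S$. Hence the null space of the linear map $T\colon F^n\to F^d$, $T(x)=Ax$, is precisely $S^\perp$. Invoking rank--nullity for $T$ gives $\dimn{\ker T} + \rk{A} = n$, and substituting $\ker T = S^\perp$ and $\rk{A}=d=\dimn{S}$ yields $\dimn{S}+\dimn{S^\perp}=n$, as required.

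The only conceptual obstacle is the one already flagged: one must not use positive-definiteness, which fails over finite fields such as $\mb{Z}_p$. What keeps the argument valid over any $F$ is that the \emph{standard} form has the identity as its Gram matrix, hence is non-degenerate in every characteristic; this is exactly what guarantees $\rk{A}=d$ rather than something smaller, and that is where the dimension count recovers its full value $n$. An alternative but equivalent route is to consider the map $\phi\colon V\to S^\ast$ sending $v$ to the linear functional $u\mapsto\inpr{v}{u}$ on $S$, whose kernel is again $S^\perp$; surjectivity of $\phi$ follows from non-degeneracy of the form, and rank--nullity finishes the proof identically.
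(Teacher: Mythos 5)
Your proof is correct. Note that the paper itself offers no proof of this lemma: it is stated as a known fact from linear algebra with a pointer to the Babai--Frankl reference, so there is nothing to compare against line by line. Your rank--nullity argument is the standard one, and you are right to flag the essential subtlety: the decomposition $V=S\oplus S^\perp$ can fail here (indeed the paper's application in Lemma 3.4 is precisely to a totally isotropic $S$ over $\mb{Z}_p$, where $S\subseteq S^\perp$), so only the dimension count survives, and your route via the kernel of $x\mapsto Ax$ delivers exactly that. One small imprecision: the equality $\rk{A}=d$ follows simply from the linear independence of the rows $v_1,\ldots,v_d$, not from non-degeneracy of the form; what non-degeneracy (equivalently, the Gram matrix being the identity) buys you is that the map $x\mapsto(\inpr{v_1}{x},\ldots,\inpr{v_d}{x})$ \emph{is} the map $x\mapsto Ax$, so that its image has dimension $d$ and its kernel is $S^\perp$. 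With that reading, rank--nullity gives $\dimn{S^\perp}=n-d$ over any field, which is exactly what the paper needs.
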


Next we discuss some basic notions from elementary number theory that will
be used in the paper. An element $s\in \mb{Z}_n$ is said to be {\em
invertible} if there exists $t\in \mb{Z}_n$ such that $st=1$ (recall that
multiplication is in $\mb{Z}_n$). The set of
all invertible elements of $\mb{Z}_n$ is called the {\em unit group} of
$\mb{Z}_n$ and is denoted by $\mb{Z}_n^{\times}$. It is easily seen that
$\mb{Z}_n^{\times}$ is a group under multiplication. We know that $k\in
\mb{Z}_n$ is invertible if and only if ${\rm gcd}(k,n)=1$. The cardinality
of the set $\{k\in \mb{Z}: 1\leq k\leq n-1,\ {\rm gcd}(k,n)=1\}$ is denoted by
$\phi(n)$, also known as {\em Euler's totient function} in literature. The
following results are well known.
\begin{lemma}\label{lem:totient}
Let $n=\prod_{i=1}^k p_i^{\alpha_i}$, where $p_1,\ldots,p_k$ are distinct
prime divisors of $n$. Then $\phi(n)=\prod_{i=1}^k
p_i^{\alpha_i-1}(p_i-1)$.
\end{lemma}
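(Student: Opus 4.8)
The plan is to reduce the general formula to the case of a prime power and then reassemble the pieces using the multiplicativity of $\phi$. The key structural fact I would invoke is the Chinese Remainder Theorem: whenever ${\rm gcd}(m,n)=1$, the map $x\mapsto(x\bmod m,\ x\bmod n)$ is a ring isomorphism $\mb{Z}_{mn}\cong\mb{Z}_m\times\mb{Z}_n$. Since a ring isomorphism carries units to units, it restricts to a group isomorphism $\mb{Z}_{mn}^{\times}\cong\mb{Z}_m^{\times}\times\mb{Z}_n^{\times}$, and comparing cardinalities yields $\phi(mn)=\phi(m)\phi(n)$ for coprime $m,n$. Equivalently, avoiding ring language, one observes that $x$ is a unit modulo $mn$ precisely when ${\rm gcd}(x,mn)=1$, i.e. exactly when $x$ is simultaneously coprime to $m$ and to $n$; the CRT correspondence then matches each such residue bijectively with a pair $(a,b)$ where $a$ is coprime to $m$ and $b$ is coprime to $n$, giving the same product formula.

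Next I would handle a single prime power $p^{\alpha}$ by direct counting. Among the residues $\{0,1,\dots,p^{\alpha}-1\}$, those sharing a common factor with $p^{\alpha}$ are exactly the multiples of $p$, namely $0,p,2p,\dots,(p^{\alpha-1}-1)p$, of which there are $p^{\alpha-1}$. Hence the number of units is $\phi(p^{\alpha})=p^{\alpha}-p^{\alpha-1}=p^{\alpha-1}(p-1)$.

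Finally, writing $n=\prod_{i=1}^{k}p_i^{\alpha_i}$ with the $p_i$ distinct, the factors $p_i^{\alpha_i}$ are pairwise coprime, so iterating multiplicativity gives $\phi(n)=\prod_{i=1}^{k}\phi(p_i^{\alpha_i})=\prod_{i=1}^{k}p_i^{\alpha_i-1}(p_i-1)$, as claimed. The only step requiring genuine care is the multiplicativity reduction: one must verify that the CRT correspondence indeed sends units to units (equivalently, that coprimality modulo $mn$ is the same as simultaneous coprimality modulo $m$ and modulo $n$), after which the prime-power count and the final product are routine.
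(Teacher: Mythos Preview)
Your proof is correct and entirely standard: multiplicativity of $\phi$ via the Chinese Remainder Theorem together with the direct count $\phi(p^{\alpha})=p^{\alpha}-p^{\alpha-1}$ is exactly the textbook argument. The paper itself does not supply a proof of this lemma at all; it is stated as a well-known fact alongside the Chinese Remainder Theorem (Lemma~\ref{lem:crt}) and simply cited later, so there is nothing to compare against beyond noting that your CRT-based reduction is precisely the structural fact the paper records immediately afterward.
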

\begin{lemma}[Chinese Remainder Theorem]\label{lem:crt}
Let $n=\prod_{i=1}^k p_i^{\alpha_i}$, where $p_1,\ldots,p_k$ are distinct
prime divisors of $n$. Then we have the following isomorphism,
\begin{align}\label{eq:chinese}
\mathbb{Z}_n & \longrightarrow \mathbb{Z}_{p_1^{\alpha_1}}\times
\mathbb{Z}_{p_2^{\alpha_2}}\times\cdots \times \mathbb{Z}_{p_k^{\alpha_k}}
\nonumber \\
s & \longmapsto (s \text{\rm\  mod } p_1^{\alpha_1},\ldots,s \text{\rm\ mod }
p_k^{\alpha_k}).
\end{align}
\end{lemma}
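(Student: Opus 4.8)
The plan is to exhibit the displayed map as a ring homomorphism and then show it is bijective; since its domain and codomain are finite sets of the same cardinality, it will suffice to establish injectivity. Write $\psi(s)=(s \bmod p_1^{\alpha_1},\ldots,s \bmod p_k^{\alpha_k})$. First I would check that $\psi$ is well defined and respects the ring operations: reduction modulo $p_i^{\alpha_i}$ is itself a ring homomorphism $\mb{Z}\to\mb{Z}_{p_i^{\alpha_i}}$ that factors through $\mb{Z}_n$ because $p_i^{\alpha_i}$ divides $n$, so $\psi$ carries sums to sums and products to products componentwise. This part is routine.

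Next I would count. By the factorization $n=\prod_{i=1}^k p_i^{\alpha_i}$, the codomain $\prod_{i=1}^k\mb{Z}_{p_i^{\alpha_i}}$ has exactly $\prod_{i=1}^k p_i^{\alpha_i}=n$ elements, matching $|\mb{Z}_n|=n$. Hence the homomorphism $\psi$ between them is an isomorphism as soon as it is injective (equivalently, as soon as it is surjective).

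The heart of the argument, and the one genuinely number-theoretic step, is injectivity. Suppose $\psi(s)=\psi(s')$. Then $p_i^{\alpha_i}$ divides $s-s'$ for every $i$. Since $p_1,\ldots,p_k$ are distinct primes, the prime powers $p_1^{\alpha_1},\ldots,p_k^{\alpha_k}$ are pairwise coprime, so I would invoke the standard fact that pairwise coprime integers each dividing a common value have their product dividing that value; therefore $n=\prod_i p_i^{\alpha_i}$ divides $s-s'$, giving $s=s'$ in $\mb{Z}_n$. This shows $\psi$ is injective, hence bijective, hence a ring isomorphism.

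I expect the only real obstacle to be the coprimality step just described, which rests on unique factorization. If one prefers a constructive route, surjectivity can be shown directly instead: setting $N_i=n/p_i^{\alpha_i}$, one has $\gcd(N_i,p_i^{\alpha_i})=1$, so B\'ezout's identity yields $M_i$ with $M_iN_i\equiv 1 \pmod{p_i^{\alpha_i}}$, and then for any target tuple $(a_1,\ldots,a_k)$ the element $s=\sum_i a_iM_iN_i$ satisfies $\psi(s)=(a_1,\ldots,a_k)$, since $N_i\equiv 0\pmod{p_j^{\alpha_j}}$ for $j\neq i$. Either route closes the argument.
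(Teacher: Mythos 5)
Your argument is correct: it is the standard proof of the Chinese Remainder Theorem (homomorphism plus cardinality count plus injectivity from pairwise coprimality, with the B\'ezout construction as an alternative for surjectivity). The paper states this lemma as a well-known fact and gives no proof of its own, so there is nothing to compare against; your write-up fills that gap correctly.
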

From the above lemma we see that $s=(s_1,\ldots,s_k)$ is invertible in
$\mb{Z}_n$ if and only if $s_i$ is invertible in $\mb{Z}_{p_i^{\alpha_i}}$
for all $i=1,\ldots,k$. 

Finally we note the following combinatorial result.

\begin{lemma}\label{lem:ramsey}
Suppose the edges of the complete graph $K_n$, $n\geq 2$ are colored using
$k$ colors, such that the subgraph consisting of the edges of any given color is
bipartite. Then $n\leq 2^k$.
\end{lemma}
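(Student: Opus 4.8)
The plan is to encode the bipartite structure of each color class as a binary label on the vertices and then argue that these labels distinguish every pair of vertices. First I would use the hypothesis that for each color $i\in\{1,\dots,k\}$ the spanning subgraph $G_i$ on the vertex set $V$ of $K_n$ consisting of exactly the edges colored $i$ is bipartite. Bipartiteness is equivalent to the existence of a proper $2$-coloring of the vertices, so I can fix a function $c_i\colon V\to\{0,1\}$ with the property that $c_i(u)\neq c_i(v)$ whenever $\{u,v\}$ is an edge of color $i$. Here isolated vertices of $G_i$ are harmless: they may be assigned either value, which is the point of treating each color class as a subgraph on the full vertex set.

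Next I would bundle these $k$ binary labels into a single map $\Phi\colon V\to\{0,1\}^k$ defined by $\Phi(v)=(c_1(v),\dots,c_k(v))$. The crux is to show that $\Phi$ is injective. Given two distinct vertices $u,v$, the edge $\{u,v\}$ exists in $K_n$ and carries some color, say $i$; by the defining property of $c_i$ we then have $c_i(u)\neq c_i(v)$, so $\Phi(u)$ and $\Phi(v)$ differ in their $i$-th coordinate. Hence $\Phi$ is injective, and comparing cardinalities yields $n=|V|\leq|\{0,1\}^k|=2^k$, which is the desired bound.

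The only real subtlety is ensuring that each color class is viewed as a spanning subgraph on all $n$ vertices rather than only on its non-isolated vertices, so that the labelings $c_i$ are simultaneously defined on the whole vertex set; this is exactly what lets the single map $\Phi$ record all $k$ colorings at once. I do not expect any genuine obstacle beyond this piece of bookkeeping, since the injectivity step relies on nothing more than the fact that in $K_n$ every pair of distinct vertices is joined by an edge of some color.
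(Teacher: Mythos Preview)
Your proof is correct and follows essentially the same approach as the paper: the paper also chooses, for each color $i$, a bipartition $\{A_i,B_i\}$ of the full vertex set $V$ with all $i$-colored edges crossing it, encodes each vertex by the resulting vector in $\{0,1\}^k$, and proves injectivity by observing that the endpoints of any edge differ in the coordinate indexed by that edge's color. Your remark about treating each color class as a spanning subgraph so that isolated vertices receive a label is exactly the bookkeeping the paper handles implicitly by taking $\{A_i,B_i\}$ to be a partition of all of $V$.
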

\begin{proof}
Let $V$ denote the vertex set of the complete graph $K_n$. We show an
injection from $V$ into the set $\{0,1\}^k$. For
each $i$, choose a partition $\{A_i,B_i\}$ of $V$ such that all edges of
color $i$ are between the sets $A_i$ and $B_i$. By hypothesis, such a
bi-partition exists for all $i=1,\ldots,k$. For $v\in V$, define
$\tilde{v}=(v_1,\ldots,v_k)$ by,
\begin{align*}
v_i = \begin{cases}
		0 & \text{ if $v\in A_i$ }, \\
		1 & \text{ if $v\in B_i$ }.
\end{cases}
\end{align*}
We now show that the map $v\mapsto \tilde{v}$ is injective. Let $v,w$ be
two distinct vertices in $V$. Let $i$ be the color of the edge $vw$. Then
we see that $\tilde{v},\tilde{w}$ differ in the $i^{th}$ component. This
proves the injectivity of the map $v\mapsto \tilde{v}$ and the lemma
follows.  
\end{proof}

\begin{notation}{\rm
We will denote permutations of $\mb{Z}_n$ as $n$-tuples
$(\sigma_1,\ldots,\sigma_n)$ where
each $\sigma_i\in \mb{Z}_n$ and is distinct. This is not to be confused with
 the cycle representation of a permutation, as is customary in algebra. Since we
do not use cycle representation of permutations in this paper, we hope
there is no confusion. Let $\sigma=(\sigma_1,\ldots,\sigma_n)$
and $\tau=(\tau_1,\ldots,\tau_n)$, be $n$-tuples over $\mb{Z}_n$. Then $\sigma\pm \tau$ denotes the tuple
$(\sigma_1\pm \tau_1,\ldots,\sigma_n\pm \tau_n)$. For $c\in \mb{Z}_n$,
$c.\sigma$ will denote the tuple $(c\sigma_1,\ldots,c\sigma_n)$, and
$c+\sigma$ will denote the tuple $(c+\sigma_1,\ldots,c+\sigma_n)$.  
}
\end{notation}

\begin{lemma}\label{lem:evencase}
Let $n$ be even and $(a_1,\ldots,a_n)$ and $(b_1,\ldots,b_n)$ be two
permutations of $\mb{Z}_n$. Then $a+b$ is not a permutation of $\mb{Z}_n$.
\end{lemma}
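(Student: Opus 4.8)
The plan is to use a summation argument modulo $n$. Since $a=(a_1,\ldots,a_n)$ is a permutation of $\mb{Z}_n$, the entries $a_1,\ldots,a_n$ are exactly the elements $0,1,\ldots,n-1$ in some order, so $\sum_{i=1}^n a_i \equiv 0+1+\cdots+(n-1) = \frac{n(n-1)}{2} \pmod{n}$. The same holds for $b$. The key observation is that when $n$ is even, $\frac{n(n-1)}{2}$ is \emph{not} divisible by $n$: writing $n=2m$, we get $\frac{n(n-1)}{2} = m(n-1)$, and since $n-1$ is odd, this is $\equiv m(n-1) \equiv -m \equiv m \pmod{n}$ (as $mn\equiv 0$), which is $n/2 \not\equiv 0 \pmod n$. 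So each of $a$ and $b$ has coordinate-sum congruent to $n/2$ modulo $n$.

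First I would compute $\sum_{i=1}^n (a_i+b_i) = \sum_{i=1}^n a_i + \sum_{i=1}^n b_i \equiv \frac{n}{2} + \frac{n}{2} = n \equiv 0 \pmod{n}$. Next I would argue by contradiction: suppose $a+b$ \emph{were} a permutation of $\mb{Z}_n$. Then its coordinate-sum would also have to be $\equiv \frac{n(n-1)}{2} \equiv \frac{n}{2} \pmod{n}$, by the same computation applied to the permutation $a+b$. But we have just shown the coordinate-sum of $a+b$ is $\equiv 0 \pmod{n}$, and since $n$ is even, $0 \not\equiv \frac{n}{2} \pmod{n}$. This contradiction shows $a+b$ cannot be a permutation.

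The heart of the argument is the parity fact that for even $n$ the sum $0+1+\cdots+(n-1)$ reduces to $n/2$ rather than $0$ modulo $n$, which is precisely what creates the obstruction; for odd $n$ the analogous sum is $\equiv 0$ and no such obstruction exists (consistent with the problem being nontrivial in the odd case). I do not anticipate a genuine obstacle here, as the whole proof is a short invariant computation; the only point requiring a little care is the modular reduction of $\frac{n(n-1)}{2}$, where one must keep track of the factor of $2$ correctly depending on the parity of $n$. I would present the invariant ``coordinate-sum modulo $n$ of any permutation equals $\frac{n(n-1)}{2}$'' as the single reusable fact and then simply contrast the value $0$ obtained for $a+b$ with the forced value $n/2$.
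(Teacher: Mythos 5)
Your proof is correct and follows essentially the same route as the paper: both compare the coordinate-sum of $a+b$ modulo $n$ (which is $0$) with the coordinate-sum $\frac{n(n-1)}{2}\equiv n/2\not\equiv 0 \pmod n$ forced on any permutation when $n$ is even. Your write-up is if anything slightly more explicit about the modular reduction than the paper's.
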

\begin{proof}
Let $c=a+b=(c_1,\ldots,c_n)$. Treating $a,b,c$ as $n$-tuples over $\mb{Z}$
we have,  $c_i \equiv a_i+b_i$ (mod $n$). Summing up over all $i$, we have,
\begin{align*}
\sum_{i=1}^n c_i &\equiv \sum_{i=1}^n (a_i+b_i) \text{ (mod $n$) } \\
\text{ or, } \sum_{i=1}^n (i-1) &\equiv \sum_{i=1}^n 2(i-1) \text{ (mod
$n$) } \\
\text{ or, } \frac{n(n-1)}{2} &\equiv n(n-1) \equiv 0 \text{ (mod $n$) }
\end{align*}
which is a contradiction as $(n-1)/2$ is not an integer when $n$ is even.
This proves the lemma.
\end{proof}

\begin{lemma}\label{lem:distinctsum}
Let $a=(_0,\ldots,a_{n-1})$ and $b=(b_0,\ldots,b_{n-1})$
be distinct permutations of the set $\{0,\ldots,n-1\}$ such that the
component-wise sums $c_i=a_i+b_i$ are all distinct. Then there exist $0\leq
j,k\leq n-1$ such that $c_j = c_k+1$. 
\end{lemma}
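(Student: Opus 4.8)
The plan is to argue by contradiction. Suppose no pair $j,k$ satisfies $c_j = c_k+1$; equivalently, no two of the distinct integers $c_0,\ldots,c_{n-1}$ are consecutive. Since $0\le a_i,b_i\le n-1$, each $c_i$ lies in $\{0,1,\ldots,2n-2\}$. The first step is to pin down the set $C=\{c_0,\ldots,c_{n-1}\}$ exactly: sorting the $c_i$ as $c_{(1)}<\cdots<c_{(n)}$ and using that consecutive sorted values differ by at least $2$ (they are distinct and none differ by $1$), I get $c_{(n)}-c_{(1)}\ge 2(n-1)$. As $c_{(1)}\ge 0$ and $c_{(n)}\le 2n-2$, this forces equality throughout, so $c_{(1)}=0$, $c_{(n)}=2n-2$, every gap equals $2$, and hence $C=\{0,2,4,\ldots,2n-2\}$, the even integers in the range.

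The heart of the argument is to show that this configuration forces $a=b$, contradicting the hypothesis that $a,b$ are distinct. Note first that the first-moment identity $\sum_i c_i=\sum_i a_i+\sum_i b_i=n(n-1)$ is consistent with $C$ (the even integers also sum to $n(n-1)$), so it yields nothing; the real leverage comes from the second moment. Writing $S=\sum_{m=0}^{n-1}m^2$, I would expand
\[
\sum_i c_i^2=\sum_i (a_i+b_i)^2=\sum_i a_i^2+\sum_i b_i^2+2\sum_i a_ib_i=2S+2\sum_i a_ib_i.
\]
On the other hand, from $C=\{0,2,\ldots,2n-2\}$ one computes $\sum_i c_i^2=\sum_{m=0}^{n-1}(2m)^2=4S$. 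Comparing the two expressions gives $\sum_i a_ib_i=S=\sum_{m=0}^{n-1}m^2$.

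Finally I would invoke an extremal inequality to conclude. Since $a$ and $b$ are both rearrangements of $\{0,\ldots,n-1\}$, the rearrangement inequality (equivalently Cauchy--Schwarz, $\sum_i a_ib_i\le (\sum_i a_i^2)^{1/2}(\sum_i b_i^2)^{1/2}=S$) gives $\sum_i a_ib_i\le S$, with equality precisely when $a_i=b_i$ for every $i$; because the values $0,\ldots,n-1$ are distinct, the maximizing pairing is unique. Hence the equality $\sum_i a_ib_i=S$ forces $a=b$, the desired contradiction, and the lemma follows.

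I expect the only genuinely subtle point to be the realization that the first moment is useless here (the all-even set has the same sum as a generic distinct-sum configuration), so the argument must climb to the second moment and combine it with the equality case of rearrangement/Cauchy--Schwarz. The elementary gap argument fixing $C$, and the routine expansion of $\sum_i c_i^2$, are straightforward by comparison.
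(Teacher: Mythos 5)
Your proof is correct. The first half --- the gap-counting argument that forces $C=\{0,2,\ldots,2n-2\}$, i.e.\ $c_i=2i$ after sorting --- is exactly the paper's argument. You diverge only in how you extract the contradiction $a=b$ from this: the paper finishes with a short greedy induction ($a_0+b_0=0$ forces $a_0=b_0=0$; then $a_1,b_1\ge 1$ and $a_1+b_1=2$ force $a_1=b_1=1$; and so on, giving $a_i=b_i=i$), whereas you compute the second moment $\sum_i c_i^2=4S$ versus $\sum_i c_i^2=2S+2\sum_i a_ib_i$ to get $\sum_i a_ib_i=S$, and then invoke the equality case of the rearrangement inequality to conclude $a=b$. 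Both finishes are valid; your observation that the first moment carries no information is accurate, and the strict-equality case you need (that $\sum_i a_ib_i=\sum_m m^2$ forces $a=b$ when both are permutations of $\{0,\ldots,n-1\}$) does hold because the entries are pairwise distinct, so any non-identity pairing admits a strictly improving swap. The paper's induction is the more elementary route and uses the full strength of $c_i=2i$ positionally; your version is a nice global identity argument but imports an extremal inequality where a two-line induction suffices. One small caution: if you lean on the Cauchy--Schwarz formulation rather than rearrangement, the equality case only gives proportionality $b=\lambda a$, and you need the extra (easy) step that $\lambda=1$ since both vectors have the same norm and nonnegative entries.
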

\begin{proof}
Without loss of generality assume $c_i=a_i+b_i$
satisfy the ordering $c_0<c_1<\cdots < c_{n-1}$. Now suppose the claim is
not true. Then we have, 
$c_i-c_{i-1}\geq 2$ for $1\leq i\leq n-1$.  Summing up we get $2n-2 \leq \sum_{i=1}^{n-1}
(c_i-c_{i-1}) = c_{n-1}-c_0 \leq 2n-2$. Thus $c_i-c_{i-1}=2$ for all $1\leq
i\leq n-1$, which implies $c_i=2i$ for all $0\leq i\leq n-1$. Now
$a_0+b_0=c_0=0$ implies $a_0=b_0=0$. Now $c_1=a_1+b_1=2$, therefore we must
have $a_1=b_1=1$. Continuing this way, we conclude $a_i=b_i=i$ for all
$0\leq i\leq n-1$, contradicting the fact that the permutations were
distinct. 
\end{proof}

\section{Sum of Permutations being Permutation}\label{sec:sumperm}
In this section, we consider the maximum size of a collection of
permutations of $\Zn$ such that sum of any two distinct permutations in the
collection is again a permutation (not necessarily in the collection). A
collection ${\cal P}$ of permutations of $\Zn$ is said to satisfy property
$\mathbf{(P1)}$ if for any two distinct permutations $\sigma,\tau$ in
${\cal P}$, $\sigma+\tau$ is again a permutation of $\Zn$. Let $s(n)=\max\{|{\cal P}|: {\cal P}\subseteq \PsZn, {\cal P}
\text{ satisfies } \mathbf{(P1)} \}$.\medskip

From Lemma \ref{lem:evencase}, we note that $s(n)=1$ when $n$ is even. Now
we present a construction for the lower bound on $s(n)$ when $n$ is odd.
 \begin{lemma}\label{lem:lb}
Let $n$ be an odd number $\geq 3$.  Then $s(n)\geq (n\phi(n))/2^k$ where $\phi(n)$ is
the Euler's totient function and $k$ is the number of distinct prime
divisors of $n$.
\end{lemma}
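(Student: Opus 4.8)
The plan is to exhibit an explicit family of affine permutations. For a unit $a\in\mb{Z}_n^\times$ and any $b\in\mb{Z}_n$, the map $x\mapsto ax+b$, written as the tuple $\sigma_{a,b}=(b,\,a+b,\,2a+b,\ldots,(n-1)a+b)$, is a permutation of $\Zn$ precisely because multiplication by a unit is a bijection of $\Zn$. I would take $\mc{P}=\{\sigma_{a,b}: a\in A,\ b\in\mb{Z}_n\}$ for a carefully chosen set of units $A\subseteq\mb{Z}_n^\times$, so that $|\mc{P}|=n\,|A|$, and the goal is to make $|A|=\phi(n)/2^k$.

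First I would reduce property $\mathbf{(P1)}$ to a condition on $A$. The tuple sum $\sigma_{a,b}+\sigma_{a',b'}$ is the affine map $x\mapsto(a+a')x+(b+b')$, which is a permutation iff $a+a'\in\mb{Z}_n^\times$, and two such tuples are distinct iff $(a,b)\ne(a',b')$. So $\mathbf{(P1)}$ holds iff: for $a=a'$ (which forces $b\ne b'$) the element $2a$ is a unit, and for $a\ne a'$ the element $a+a'$ is a unit. Since $n$ is odd, $2$ is a unit, so $2a$ is automatically a unit; thus it suffices to build $A\subseteq\mb{Z}_n^\times$ such that $a+a'$ is a unit for \emph{all} $a,a'\in A$ (the case $a=a'$ being harmless), which is the cleaner condition to verify.

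The construction of $A$ proceeds coordinatewise through the Chinese Remainder Theorem (Lemma \ref{lem:crt}). Writing $a=(a_1,\ldots,a_k)$ under $\mb{Z}_n\cong\prod_{i}\mb{Z}_{p_i^{\alpha_i}}$, one has that $a$ is a unit iff $a_i\not\equiv 0\pmod{p_i}$ for every $i$, and that $a+a'$ is a unit iff $a_i+a'_i\not\equiv 0\pmod{p_i}$ for every $i$. In each factor I choose a set $R_i$ consisting of $(p_i-1)/2$ nonzero residues mod $p_i$ containing exactly one element from each pair $\{r,-r\}$ (possible since $p_i$ is odd, so $r\ne -r$), and let $S_i$ be the set of all elements of $\mb{Z}_{p_i^{\alpha_i}}$ whose reduction mod $p_i$ lies in $R_i$, so that $|S_i|=p_i^{\alpha_i-1}(p_i-1)/2$. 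Setting $A=\prod_i S_i$, for any $s,s'\in S_i$ a relation $s+s'\equiv 0\pmod{p_i}$ would force $\{s\bmod p_i,\ s'\bmod p_i\}=\{r,-r\}\subseteq R_i$, which is excluded by construction; hence every coordinate sum is nonzero mod $p_i$ and $a+a'$ is always a unit. By Lemma \ref{lem:totient}, $|A|=\prod_i p_i^{\alpha_i-1}(p_i-1)/2=\phi(n)/2^k$, yielding $|\mc{P}|=n\,|A|=n\phi(n)/2^k$.

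The main obstacle is not any single hard inequality but getting the verification to close simultaneously across all prime-power factors: one must confirm both that each $\sigma_{a,b}$ is genuinely a permutation and that the coordinatewise ``avoid $\pm$ pairs'' condition in $\mb{Z}_{p_i}$ is exactly what prevents $a+a'$ from being a zero divisor in $\mb{Z}_n$. The subtle point to state carefully is the reconciliation of the same-$a$ case (which relies on $2$ being a unit, hence on $n$ being odd) with the distinct-$a$ case, after which the cardinality count follows directly from Lemmas \ref{lem:crt} and \ref{lem:totient}.
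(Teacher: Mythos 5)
Your proposal is correct and follows essentially the same route as the paper: both build the family out of affine permutations $x\mapsto ax+b$ (the paper parametrizes them as $s.(x+0,\ldots,x+n-1)$, which is the same family) and both use the Chinese Remainder Theorem to pick, in each prime factor, a set of units avoiding the pairs $\{r,-r\}$ so that all pairwise sums of the chosen multipliers remain units; the paper simply fixes the concrete transversal $\{1,\ldots,(p_i-1)/2\}$ where you allow an arbitrary one. The cardinality count and the resulting bound $n\phi(n)/2^k$ are identical.
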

\begin{proof}
Our construction is based on the following observations.
\begin{enumerate}[{\rm (a)}]
\item For a permutation $\tau$ of ${\mathbb Z}_n$, $k.\tau$ is a
permutation of ${\mathbb Z}_n$ if and only if $k$ is invertible in
${\mathbb Z}_n$.
\item For a permutation $\tau$ of ${\mathbb Z}_n$, $k+\tau$ is a
permutation for all $k\in \mathbb{Z}_n$. 
\item Let $n=\prod_{i=1}^k p_i^{\alpha_i}$, where $p_i$'s are distinct prime
divisors of $n$. Then there exists a subset $S$ of invertible elements of $\mathbb{Z}_n$ with
$|S|=\prod_{i=1}^k p^{\alpha_i-1}(p_i-1)/2$ such that for any $x,y\in S$,
$x+y$ is invertible in $\mathbb{Z}_n$. To describe the set $S$, we use the
isomorphism in Lemma \ref{lem:crt}. 
Let $S=\{s\in \mb{Z}_n: s=(s_1,\ldots,s_k) \text{ where } s_i\equiv c_i
\text{ mod } p_i \text{ for some } 1\leq c_i\leq (p_i-1)/2\}$. Note that in
the description of $S$, each $s_i$ has $p_i^{\alpha_i-1}(p_i-1)/2$ choices,
and hence the set $S$ has the desired cardinality. Further each element of $S$ is invertible in
$\mathbb{Z}_n$. Now for $s,t\in S$, we have
$s+t=(s_1+t_1,\ldots, s_k+t_k)$ where $s=(s_1,\ldots,s_k)$ and
$t=(t_1,\ldots,t_k)$. By definition of $S$, observe that
$s_i+t_i\not\equiv 0$ (mod $p_i$) for all $1\leq i\leq k$. Thus
$s+t$ is invertible in $\mathbb{Z}_n$.   
\end{enumerate}
Now consider the set 
${\cal P}=\{s.(x+0,x+1,\ldots,x+n-1): s\in S, x\in \mathbb{Z}_n\}$. By
observations (a),(b) and (c), we see that ${\cal P}$ consists of
permutations of $\mathbb{Z}_n$. Let $\sigma,\tau$ be two distinct
permutations in ${\cal P}$ with $\sigma=s.(x+0,\ldots,x+n-1)$ and
$\tau=t.(y+0,\ldots,y+n-1)$. Then
$\sigma+\tau=(sx+ty)+(s+t).(0,1,\ldots,n-1)$. Since $s+t$ is invertible, by
observations (a) and (b), we conclude that $\sigma+\tau$ is a permutation.
Thus ${\cal P}$ satisfies $\mathbf{(P1)}$. Finally we observe that $|{\cal
P}|=n.|S|=n.\prod_{i=1}^k p_i^{\alpha_i-1}(p_i-1)/2 = (n.\phi(n))/2^k$.
This proves the lemma.
\end{proof}

\begin{remark}{\rm 
We note that when $n$ is a prime number, the bound in Lemma \ref{lem:lb}
reduces to $n(n-1)/2$. 
}
\end{remark}

We now establish an upper bound on $s(n)$ in the case when $n$ is an odd
prime. 

\begin{theo}[Cauchy-Davenport]
\label{theoremCauchyDavenport}
Let $n$ be any prime number and let $A, B \subset \mb{Z}_n$. Then $|A + B| \geq \min\{n, |A| + |B| - 1 \}$, where $A + B =\{a + b : a \in A, b\in B \}$.
\end{theo}

\begin{lemma}\label{lem:ub}
Let $n$ be an odd prime. Then $s(n)\leq \frac{e^2}{\pi} n \left(\frac{n-1}{e}\right)^\frac{n-1}{2}$.
\end{lemma}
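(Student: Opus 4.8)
The plan is to pass to a normalized subfamily, translate the additive condition into a collection of local ``no two elements are negatives of each other'' constraints, and then convert these---via Cauchy--Davenport---into a product bound that collapses to a double factorial, which Stirling's formula turns into the stated closed form.

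First I would exploit the symmetries of $\mathbf{(P1)}$. Adding an arbitrary constant to each permutation \emph{separately} preserves $\mathbf{(P1)}$, since $(\sigma+c_\sigma)+(\tau+c_\tau)=(\sigma+\tau)+(c_\sigma+c_\tau)$ is just a constant shift of the permutation $\sigma+\tau$, hence again a permutation. So I may translate every $\sigma\in\mathcal P$ to make its first coordinate $0$; this only identifies permutations lying in a common translation class, and each such class has exactly $n$ members, so if $\mathcal P_0$ denotes the resulting family of distinct \emph{normalized} permutations (those with $\sigma_1=0$) then $|\mathcal P|\le n\,|\mathcal P_0|$. The family $\mathcal P_0$ still satisfies $\mathbf{(P1)}$, and since $\sigma_1=\tau_1=0$ forces $(\sigma+\tau)_1=0$, every pairwise sum is again normalized. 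It therefore suffices to bound $|\mathcal P_0|$.

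Next I would rewrite the condition. For $\sigma,\tau\in\mathcal P_0$, the sum $\sigma+\tau$ is a permutation precisely when its coordinates are distinct, i.e.\ for every pair of positions $i\ne j$ one has $(\sigma_i-\sigma_j)+(\tau_i-\tau_j)\ne 0$. Fixing $\{i,j\}$ and letting $\sigma$ range over $\mathcal P_0$, this says the difference-values $\sigma_i-\sigma_j$ never occur in a pair $\{d,-d\}$; as $n$ is odd the only self-paired element is $0$, which cannot be a difference of distinct coordinates, so at most $(n-1)/2$ distinct difference-values arise at each position-pair. Taking $i$ to be the first position, each value-set $V_j=\{\sigma_j:\sigma\in\mathcal P_0\}$ avoids $\{v,-v\}$ pairs and thus has size at most $(n-1)/2$.

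The heart of the argument---and the step I expect to be the main obstacle---is to upgrade these many local sign-avoidance constraints into a single global count. The idea is to reveal the coordinates one position at a time: a value placed at a new position must be new (bijectivity) and sign-compatible with the values already placed across the whole family, and Cauchy--Davenport is exactly what forces the set of still-admissible values to shrink by two at each stage, since an admissible set $A$ satisfies $|A+A|\ge\min\{n,2|A|-1\}$, so once $A+A$ is pinned down the forbidden negatives trim $A$ by the next odd amount. Carrying this out should bound $|\mathcal P_0|$ by the product $1\cdot 3\cdot 5\cdots(n-2)=(n-2)!!$, the number of perfect matchings of the $n-1$ nonzero residues (equivalently, one injects each $\sigma\in\mathcal P_0$ into such a matching, the injectivity being where $\mathbf{(P1)}$ is consumed). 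The real difficulty is that the constraints for different position-pairs are \emph{not} independent, so the dependencies must be controlled rather than naively multiplying the per-coordinate bounds $(n-1)/2$, which is far too weak. Finally I would estimate $(n-2)!!=(n-1)!\big/\bigl(2^{(n-1)/2}((n-1)/2)!\bigr)$ using the Stirling inequalities $\sqrt{2\pi k}\,(k/e)^k\le k!\le e\sqrt{k}\,(k/e)^k$, which gives $(n-2)!!\le \frac{e}{\sqrt\pi}\bigl(\tfrac{n-1}{e}\bigr)^{(n-1)/2}\le \frac{e^2}{\pi}\bigl(\tfrac{n-1}{e}\bigr)^{(n-1)/2}$, and multiplying by the normalization factor $n$ yields $s(n)\le \frac{e^2}{\pi}\,n\bigl(\tfrac{n-1}{e}\bigr)^{(n-1)/2}$.
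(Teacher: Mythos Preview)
Your normalization step and the final Stirling estimate are fine, and you correctly isolate the local ``no $\{d,-d\}$ pair'' constraint that Cauchy--Davenport will eventually exploit. The genuine gap is the passage from these local constraints to a product with only $(n-1)/2$ factors. You assert that revealing coordinates one at a time makes the admissible set ``shrink by two at each stage'' and hence $|\mathcal P_0|\le (n-2)!!$, but nothing in your argument forces the shrinking to continue past the first step: conditioning on $\sigma_1=0$ gives $|V_j|\le (n-1)/2$ for every later column $j$, and conditioning further on $\sigma_1,\ldots,\sigma_k$ gives (via Cauchy--Davenport exactly as you suggest) at most $\lceil (n-k)/2\rceil$ values in each later column. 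Run over all $n$ columns this yields only $|\mathcal P|\le n\prod_{i=2}^{n}\lceil (n-i+1)/2\rceil \asymp n\bigl(((n-1)/2)!\bigr)^2$, which is of order $(n/2e)^n$ and nowhere near $(n/e)^{(n-1)/2}$. The parenthetical about injecting each $\sigma$ into a perfect matching of the nonzero residues is not a proof either: you do not specify the map, and there is no evident reason distinct permutations in $\mathcal P_0$ would produce distinct matchings.

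What is missing is the one global structural fact the paper supplies before invoking Cauchy--Davenport: over the field $\mb{Z}_n$, every permutation $A$ satisfies $\inpr{A}{A}=\sum_{i=0}^{n-1} i^2\equiv 0$, and applying this to $u$, $v$, $u+v$ for any two rows $u,v$ of the matrix $M$ of permutations gives $\inpr{u}{v}=0$. Thus the row space of $M$ is totally isotropic, so $\mathrm{rank}(M)\le (n-1)/2$. Only now is the column-by-column count decisive: the rows are determined by their first $r\le (n-1)/2$ coordinates, so the Cauchy--Davenport bound needs to be multiplied over just $r$ columns, giving $|\mathcal P|\le n\prod_{i=2}^{r}\lceil (n-i+1)/2\rceil$, which Stirling then turns into the stated bound. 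Your sketch has the second ingredient but not the first; without an argument of this type (linear-algebraic or otherwise) explaining why only $(n-1)/2$ coordinates worth of freedom are present, the double-factorial bound is unsupported.
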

\begin{proof}
We prove the case $n=3$ seperately. It suffices to prove $s(3)\leq 3$. This
follows from the observation that among any four permutations
$\sigma_1,\sigma_2,\sigma_3,\sigma_4$ of
$\mb{Z}_3$, there exist permutations $\sigma_i,\sigma_j$ such that
$\sigma_j$ is obtained from $\sigma_i$ by interchanging exactly one pair,
and thus 
$\sigma_i+\sigma_j$ is not a permutation.

From now on we assume $n>3$. Let ${\cal P}$ with $|{\cal P}|=m$ be a collection of permutations of
${\mathbb Z}_n$ satisfying $\mathbf{(P1)}$. Let
$A^i=(a_{i1},\ldots,a_{in})$ be the permutations in ${\cal P}$ for
$i=1,\ldots,m$. Let $M$ denote the $m\times n$ matrix with $(i,j)^{th}$
entry $a_{ij}$. When $n$ is prime the ring $\mb{Z}_n$ is infact a field,
and thus we can regard the permutations $A^i$ as vectors in $n$-dimensional
vector space $(\mb{Z}_n)^n$. Let $\langle\:,\rangle$ be the standard inner product
on $({\mathbb Z}_n)^n$. Then, we note that for a permutation $A$ of
$\mathbb{Z}_n$ we have,
\begin{equation}\label{eq:innerprod}
\langle A,A\rangle = \sum_{i=0}^{n-1} i^2 = \frac{n}{6}(n-1)(2n-1) \equiv 0\quad (\text{mod } n).
\end{equation}

\noindent{\bf Claim:} $\rk{M}\leq (n-1)/2$. Let $u,v$ be vectors
corresponding to two different rows in $M$. Then from
(\ref{eq:innerprod}), we have $\langle u,u\rangle = \langle v,v\rangle=0$.
Further since $u+v$ is also a permutation, we have $\langle u+v,u+v\rangle=0$. Using these
relations, we conclude $\langle u,v\rangle = 0$ for all $u,v\in M$. Let
$S$ be the subspace spanned by the rows of $M$. Then, by earlier
observation $S$ is totally isotropic, i.e, $S\subseteq S^\perp$. 
Combining this with the relation
$\dimn{S}+\dimn{S^\perp}=n$, we have $\rk{M}=\dimn{S}\leq \lfloor
n/2\rfloor = (n-1)/2$ as $n$ is odd.

Let $r = \rk{M}$. We may assume without loss of generality, by permuting the columns of $M$ if necessary, that the first $r$ columns of $M$ are linearly independent. 
Thus there exist linear functions $f_i: (\mathbb{Z}_n)^r\rightarrow
\mathbb{Z}_n$ for $i=1,\ldots,n-r$ such that a row of $M$ can be written
as $(x_1,x_2,\ldots,x_r,y_1,y_2,\ldots,y_{n-r})$ where
$y_i=f_i(x_1,\ldots,x_r)$. Thus any two rows in $M$
have different projection on the first $r$ positions.  

\medskip
\noindent{\bf Claim:} For any odd number $k \in \{1, r\}$, let $M'$ be an
$l \times n$ submatrix of $M$ in which all the rows agree on the first $k$
positions. Then at most $(n-k)/2$ distinct elements of $\mb{Z}_n$ appear in
any column of $M'$ beyond $k$. Suppose, for the sake of contradiction, that
the $j$-th column of $M'$ for some $j > k$ contains more than $(n-k)/2$
elements of $\mb{Z}_n$. Let $B$ be the set of elements that appear in the
$j$-th column of $M'$. By Theorem~\ref{theoremCauchyDavenport}, $|B + B|
\geq  n - k + 1$. Hence $B + B$ contains some $2x$ where $x$ is the common
entry in some $i$-th column, $i \leq k$, of $M'$. Thus either $x \in B$,
which violates the property that every row  of $M'$ is a permutation or 
$\exists x', x'' \in B, \; x' \neq x''$ such that $x' + x'' = 2x$ which 
violates the property that sum of any two rows of $M'$ is a permutation.

Since $M$ is completely determined by its first $r$ rows and the above claim, the total number of rows $m$ of $M$ is at most $n \times \prod_{i=2}^{r}\left\lceil(n - i + 1)/2\right\rceil$. Hence $m \leq n \left( \prod_{j=0}^{\lfloor r/2 \rfloor -1}\left((n-1)/2 - j \right) \right)^2$ and since $r \leq (n-1)/2$, we get   $m \leq n \left( \frac{((n-1)/2)!}{(\lceil (n-1)/4 \rceil)!} \right)^2$ which by Stirling's approximation gives us $m \leq \frac{e^2}{\pi} n \left(\frac{n-1}{e}\right)^\frac{n-1}{2}$.
\end{proof}

\section{Sum of Permutations being Non-Permutation}\label{sec:nonperm}
We consider another variant of the previous problem. We now seek a
collection of permutations of $\Zn$ such that any two distinct permutations
sum up to a non-permutation. We say that a collection ${\cal P}$ of
permutations of $\Zn$ satisfies property $\mathbf{(P2)}$ if any two
distinct permutations in ${\cal P}$ sum up to a non-permutation. Let
$t(n)=\max\{|{\cal P}|: {\cal P}\subseteq \PsZn, {\cal P} \text{ satisfies
} \mathbf{(P2)}\}$. From Lemma \ref{lem:evencase}, we see that $t(n)=n!$
when $n$ is even. We consider the case when $n$ is odd in the following
lemma.

\begin{lemma}\label{lem:nonp}
Let $n$ be an odd number. Then $2^{(n-1)/2}.(\frac{n-1}{2})!\leq t(n)\leq
2^k(n-1)!/\phi(n)$, where $k$ is the number of distinct prime divisors of
$n$.
\end{lemma}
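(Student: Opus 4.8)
The two inequalities are proved by separate arguments. For the lower bound I will exhibit a single explicit family with property $\mathbf{(P2)}$, built so that Lemma \ref{lem:distinctsum} certifies each pairwise sum as a non-permutation from parity alone. Fix a pattern $p=(p_0,\dots,p_{n-1})\in\{0,1\}^n$ with exactly $(n+1)/2$ zero entries (matching the number of even residues in $\{0,\dots,n-1\}$), and let $\mathcal F=\{\sigma\in\PsZn:\sigma_i\equiv p_i\ (\mathrm{mod}\ 2)\ \text{for all }i\}$. Placing the $(n+1)/2$ even values and the $(n-1)/2$ odd values independently in their prescribed positions gives $|\mathcal F|=\big(\tfrac{n+1}{2}\big)!\,\big(\tfrac{n-1}{2}\big)!\ge 2^{(n-1)/2}\big(\tfrac{n-1}{2}\big)!$, the inequality holding because $\big(\tfrac{n+1}{2}\big)!\ge 2^{(n-1)/2}$. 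For distinct $\sigma,\tau\in\mathcal F$ each coordinate sum $c_i=\sigma_i+\tau_i$ is even, so no two $c_i$ differ by $1$; by the contrapositive of Lemma \ref{lem:distinctsum} the $c_i$ cannot all be distinct as integers, hence two of them coincide and $\sigma+\tau$ is not a permutation. This gives $t(n)\ge 2^{(n-1)/2}(\tfrac{n-1}{2})!$.

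For the upper bound I will use the duality that a family $\mathcal P$ with $\mathbf{(P2)}$ and a family $\mathcal Q$ with $\mathbf{(P1)}$ satisfy $|\mathcal P\cap\mathcal Q|\le1$: two common permutations would have a sum that is at once a permutation and a non-permutation. Consequently, if $\PsZn$ can be covered by $N$ families each having $\mathbf{(P1)}$, then $t(n)\le N$, and the goal is to achieve $N=2^k(n-1)!/\phi(n)$. To this end let the affine group $\mathrm{Aff}=\{y\mapsto ay+b:a\in\mathbb{Z}_n^{\times},\ b\in\mathbb{Z}_n\}$ act on $\PsZn$ by $(a,b)\cdot\sigma=a.\sigma+b$. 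The action is free — $a.\sigma+b=\sigma$ forces $(a-1)\sigma_i=-b$ for every $i$, and since $\sigma$ attains every value this forces $a=1,b=0$ — so its orbits partition $\PsZn$ into $n!/(n\phi(n))=(n-1)!/\phi(n)$ blocks, each of size $n\phi(n)$.

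It remains to split each orbit into $2^k$ families with $\mathbf{(P1)}$, which recycles the number theory of Lemma \ref{lem:lb}. Via Lemma \ref{lem:crt} write a unit as $a=(a_1,\dots,a_k)$ and partition $\mathbb{Z}_n^{\times}$ into $2^k$ classes indexed by $\varepsilon\in\{+,-\}^k$, assigning $a$ to the class recording, for each $i$, whether $a_i\bmod p_i$ lies in the lower half $\{1,\dots,(p_i-1)/2\}$ or the upper half. Since negation swaps these two halves modulo each $p_i$, two units $a,a'$ in the same class satisfy $a_i+a_i'\not\equiv0\pmod{p_i}$ for all $i$, so $a+a'\in\mathbb{Z}_n^{\times}$ — this is observation (c) of Lemma \ref{lem:lb} applied to every sign pattern. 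Fixing an orbit representative $\sigma$, each set $\{a.\sigma+b:a\in\text{class }\varepsilon,\ b\in\mathbb{Z}_n\}$ then has $\mathbf{(P1)}$, because the sum of two of its members is $(a+a').\sigma+(b+b')$ with $a+a'$ a unit, hence a permutation by observations (a),(b) of Lemma \ref{lem:lb}; and these $2^k$ sets partition the orbit (freeness makes the slope $a$ of each member well defined). Covering all orbits yields $2^k(n-1)!/\phi(n)$ families with $\mathbf{(P1)}$, so $t(n)\le 2^k(n-1)!/\phi(n)$. The main obstacle is precisely this upper-bound cover: the duality is immediate, but matching the target count forces both the free affine slicing (giving the factor $(n-1)!/\phi(n)$) and the split of $\mathbb{Z}_n^{\times}$ into exactly $2^k$ classes closed under the unit-sum property (giving the factor $2^k$), the latter being where the arithmetic of Lemma \ref{lem:lb} is indispensable.
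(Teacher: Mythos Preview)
Your proof is correct on both halves. For the upper bound you follow essentially the paper's route: both arguments partition $\PsZn$ into the $(n-1)!/\phi(n)$ orbits of the affine action $\sigma\mapsto a.\sigma+b$ and then extract a factor $2^k$ from each orbit. The paper does the last step by colouring the edge $\{\sigma_i,\sigma_j\}$ with the least $r$ for which $p_r\mid (s_i+s_j)$, showing each colour class is bipartite, and invoking Lemma~\ref{lem:ramsey}; you instead explicitly exhibit the $2^k$ bipartition classes of $\mb{Z}_n^{\times}$ (by the sign pattern of $a_i\bmod p_i$) and use the $\mathbf{(P1)}$/$\mathbf{(P2)}$ duality. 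These are two sides of the same coin --- your vertex $2^k$-colouring is exactly the injection that Lemma~\ref{lem:ramsey} produces --- so the arguments are equivalent, yours being the constructive phrasing.

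Your lower-bound construction, however, is genuinely different from the paper's and in fact stronger. The paper builds $2^{(n-1)/2}\big(\tfrac{n-1}{2}\big)!$ permutations by fixing the last coordinate and permuting/flipping the $(n-1)/2$ blocks $(2i,2i+1)$, then handles two cases (unequal flip strings; equal flip strings via Lemma~\ref{lem:distinctsum}). Your parity-pattern family $\mathcal F$ has size $\big(\tfrac{n+1}{2}\big)!\,\big(\tfrac{n-1}{2}\big)!\ge 2^{(n-1)/2}\big(\tfrac{n-1}{2}\big)!$, and a single appeal to the contrapositive of Lemma~\ref{lem:distinctsum} suffices because all coordinate sums are even. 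So you get a shorter argument and an improved explicit lower bound $t(n)\ge\big(\tfrac{n+1}{2}\big)!\,\big(\tfrac{n-1}{2}\big)!$.
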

\begin{proof}
We prove $t(n)\geq 2^{(n-1)/2}.(\frac{n-1}{2})!$. We say the pair of 
permutations $(a_0,\ldots,a_{n-1})$ and $(b_0,\ldots,b_{n-1})$ of the set
$\{0,\ldots,n-1\}$ are {\em admissible} if the sums $a_i+b_i$, $0\leq i\leq
m-1$, are not all distinct 
(the addition being in $\mb{Z}$). We construct a collection ${\cal P}$ of mutually
admissible permutations of $\{0,\ldots,n-1\}$. Note that ${\cal P}$ when
viewed as a set of permutations of $\mb{Z}_n$, satisfies $\mathbf{(P2)}$.
Let $m=(n-1)/2$. For $0\leq i\leq m-1$, let $B_i=(2i,2i+1)$ and
$\overline{B}_i=(2i+1,2i)$. Let $c=c_0c_1\ldots c_{m-1}$ be a binary string
of length $m$, and $\sigma$ be a permutation of $\{0,\ldots,m-1\}$. Define,
$P_{\sigma,c} = (x_0,x_1,\ldots,x_{n-1})$ such that,
\begin{equation*}
(x_{2i},x_{2i+1})=\begin{cases}
					B_{\sigma(i)} & \text{ if } c_i=0, \\
					\overline{B}_{\sigma(i)} & \text{ if } c_1=1 
\end{cases}
\end{equation*}
for all $i=0,\ldots,m-1$. It is easily observed that $P_{\sigma,c}$ is a
permutation of the set $\{0,\ldots,n-1\}$ and $P_{\sigma,c}\neq
P_{\sigma',c'}$ for $(\sigma,c) \neq (\sigma',c')$. Let ${\cal P}$
denote the collection of permutations $\{P_{\sigma,c}\}$. We now show that
any two permutations in the collection ${\cal P}$ are admissible. Let
$P_{\sigma,c}$ and $P_{\sigma',c'}$ be two distinct permutations in ${\cal
P}$. We consider two cases: \smallskip

\noindent{{\bf Case:}} $c\neq c'$. Let $i$ be such that $c_i\neq c'_i$.
Without loss of generality assume $c_i=0, c'_i=1$. Let
$P_{\sigma,c}=(x_0,\ldots,x_{n-1})$ and
$P_{\sigma',c'}=(x'_0,\ldots,x'_{n-1})$. Then we have,
$(x_{2i},x_{2i+1})=B_{\sigma(i)} = (a,a+1)$ and
$(x'_{2i},x'_{2i+1})=\overline{B}_{\sigma'(i)} = (b+1,b)$ for some $a,b$.
Thus $x_{2i}+x'_{2i}=x_{2i+1}+x'_{2i+1}=a+b+1$, and hence the permutations
are admissible. 

\noindent{{\bf Case:}} $c=c'$. In this case we must have $\sigma\neq
\sigma'$. In the block $B_i=(2i,2i+1)$, let us call $2i$ as the {\em
little}
end and $2i+1$ as the {\em big} end. Then $c=c'$ implies that in the
component-wise addition of $P_{\sigma,c}$ and $P_{\sigma',c'}$ the little
ends are summed with little ends, and big ends are summed with big ends.
Let $L$ be the set of sums of little ends, i.e,
$L=\{2\sigma(i)+2\sigma'(i): 0\leq i\leq m-1\}$ and $B$ be the sums of big
ends, i.e., $B=\{2\sigma(i)+2\sigma'(i)+2: 0\leq i\leq m-1\}$. We show that
$L\cap B$ is non-empty which will prove that $P_{\sigma,c}$ and
$P_{\sigma',c'}$ are admissible. By Lemma \ref{lem:distinctsum}, there
exist $0\leq j,k\leq m-1$ such that $\sigma(j)+\sigma'(j) =
\sigma(k)+\sigma'(k)+1$, or
$2\sigma(j)+2\sigma'(j)=2\sigma(k)+2\sigma'(k)+2$. We see that the left
side of the identity is in $L$ and the right side is in $B$. Thus $L\cap B$
is non-empty. \smallskip

Hence the collection of permutations $P_{\sigma,c}$, when viewed as
permutations of $\mb{Z}_n$ satisfy $\mathbf{(P2)}$. Finally we note that
the number of permutations is $2^m.m!=2^{(n-1)/2}.(\frac{n-1}{2})!$. \medskip

Next we prove $t(n)\leq
2^k(n-1)!/\phi(n)$. Let $n=\prod_{i=1}^k p_i^{\alpha_i}$. We define a relation $\sim$ on the set of permutations of
$\mb{Z}_n$ by $\sigma \sim \tau$ if $\sigma = t + s.\tau$ for some $t\in
\mb{Z}_n$ and $s\in \mb{Z}_n^{\times}$ (recall that $\mb{Z}_n^{\times}$
denotes the set of invertible elements of $\mb{Z}_n$). We show that $\sim$ is an
equivalence relation on $\PsZn$. Note that $\sigma\sim \sigma$ as
$\sigma=0+1.\sigma$. To show symmetry, assume $\sigma\sim \tau$. Then
$\sigma = t+s.\tau$ for some $t\in \mb{Z}_n$ and $s\in \mb{Z}_n^\times$.
Rearranging we have, $\tau = -t + s^{-1}.\sigma$. Thus $\tau\sim\sigma$. To
show transitivity assume $\sigma\sim\tau$ and $\tau\sim \gamma$. Then we
have, $\sigma=t_1+s_1.\tau$ and $\tau=t_2+s_2.\gamma$ for some $t_1,t_2\in
\mb{Z}_n$ and $s_1,s_2\in \mb{Z}_n^{\times}$. From the previous relations
we see that $\sigma=(t_1+s_1t_2)+s_1s_2.\gamma$, and therefore
$\sigma\sim\gamma$. Thus $\sim$ is an equivalence relation on $\PsZn$. We
now make the following claim:\medskip

\noindent{\bf Claim:} Let ${\cal P}$ be a collection of permutations of
$\mb{Z}_n$ satisfying $\mathbf{(P2)}$, and $\sim$ be the equivalence
relation on $\PsZn$ as defined. Then each equivalence class of $\sim$
contains atmost $2^k$ permutations from ${\cal P}$.

Let $\sigma_1,\ldots,\sigma_m$ be permutations from ${\cal P}$ which belong
to the same equivalence class. Let $\sigma$ be a permutation such that
$\sigma_i\sim \sigma$ for all $i=1,\ldots,m$. Let $s_i,t_i$ be such that
$\sigma_i=t_i+s_i.\sigma$. Now
$\sigma_i+\sigma_j=(t_i+t_j)+(s_i+s_j).\sigma$. From observations (a) and
(b) in the proof of Lemma \ref{lem:lb}, we see that $\sigma_i+\sigma_j$ is
a non-permutation if and only if $s_i+s_j$ is not invertible in $\mb{Z}_n$.
Thus $p_r | (s_i+s_j)$ for some $r$, $1\leq r\leq k$. Consider the edge coloring
of the complete graph $K_m$ on vertex set $\{1,\ldots,m\}$ using $k$ colors
as follows. We color the edge $ij$ with the color $\min\{r\in
\{1,\ldots,k\}: p_r | (s_i+s_j)\}$. \smallskip

\noindent{\bf Subclaim:} For each color $i\in \{1,\ldots,k\}$, the edges of
color $i$ span a bipartite subgraph of $K_m$. We prove this by showing
there are no monochromatic odd cycles. Assume $v_1v_2\ldots v_l$ is a
monochromatic odd cycle. Let $i$ be the color of the edges in the cycle.
Then $p_i | (s_1+s_2), p_i | (s_2+s_3), \ldots, p_i | (s_{l-1}+s_l), p_i |
(s_l+s_1)$. Adding the above we get $p_i | 2(s_1+\cdots+s_l)$ or $p_i |
(s_1+\cdots+s_l)$ as $p_i$ is odd. Now, from the $(l-1)/2$ relations $p_i |
(s_1+s_2), p_i | (s_3+s_4), \ldots, p_i | (s_{l-2} + s_{l-1})$ we get $p_i
| (s_1+\cdots + s_{l-1})$. Combining this with $p_i | (s_1+\cdots+s_l)$ we
get $p_i | s_l$ which contradicts the fact that $s_l$ in invertible in
$\mb{Z}_n$. Thus there are no monochromatic odd cycles, and hence edges of
a fixed color span a bipartite subgraph of $K_m$.\smallskip

The subclaim allows us to apply Lemma \ref{lem:ramsey} to the coloring of
$K_m$, and hence we conclude that $m\leq 2^k$. This proves the
claim.\medskip

Finally we notice that each equivalence class of $\sim$ contains $n\phi(n)$
elements, and hence there are $n!/(n\phi(n))=(n-1)!/\phi(n)$ equivalence
classes. Now from the claim, each equivalence class contains at most $2^k$
elements from ${\cal P}$, and hence $|{\cal P}|\leq
2^k(n-1)!/\phi(n)$. 
\end{proof}

\begin{remark}{\rm
We note that for a prime number $n$, the upper bound $\sim
(n/e)^n\sqrt{n}$
whereas the lower bound $\sim (n/e)^{n/2}\sqrt{n}$. Thus there is a quadratic
gap between the upper and lower bounds. We also mention that one way to
obtain permutations summing up to a non permutation is to consider a family
of permutations with mutual reverse. Two permutations $\sigma,\tau$ of
$\{0,\ldots,n-1\}$ are said to have a {\em mutual reverse} if there exist
$i\neq j$ such that $\sigma(i)=\tau(j)$ and $\sigma(j)=\tau(i)$. We call
such a family of permutations as {\em reverse full}. Note that
a reverse full family of permutations satisfies
$\mathbf{(P2)}$. From a result of Cibulka \cite{cibulka} on size of
reverse-free families of permutations, the maximum size of a reverse full
family of permutations on $n$ symbols is at most $n^{n/2+O(\log n)}$,
though we are unaware of any non-trivial lower bound for the same. Our
construction achieves $\sim (n/e)^{n/2}\sqrt{n}$, but under a much more
flexbile constraint.
}
\end{remark}

\section{Differences of Permutations being Permutation}\label{sec:ortho}
In this section, we wish to obtain upper and lower bounds on the maximum size of
set ${\cal P}\subseteq \PsZn$ such that for any two distinct permutations
$\sigma,\tau \in {\cal P}$, $\sigma-\tau$ is also a permutation of $\Zn$.
We say that ${\cal P}\subseteq \PsZn$ satisfies property $\mathbf{(P3)}$ if
for any two $\sigma,\tau\in {\cal P}$, $\sigma\neq \tau$, $\sigma-\tau$ is
a permutation of $\Zn$. Let $f(n) = \max\{|{\cal P}|: {\cal P}\subseteq
\PsZn \text{
satisfies } \mathbf{(P3)}\}$. 
\begin{qn}{\rm
What is a good estimate for $f(n)$ as defined above ?
}
\end{qn}
The above question is only superficially different from the well studied
problem of finding mutually orthogonal orthomorphisms of finite groups (See
\cite{evan1}). For
a finite group $G$, a bijection $\theta:G\rightarrow G$ is called an {\em
orthomorphism} of $G$ if the map $x\mapsto \theta(x)-x$ is a bijection. Two
orthomorphisms $\theta,\phi$ are called {\em orthogonal} if $\theta-\phi$
is a bijection. It is not hard to see that a set of $k$ permutations of
${\mathbb Z}_n$ satisfying {\bf (P3)} gives a set of $k-1$ mutually
orthogonal orthomorphisms
of ${\mathbb Z}_n$ and vice versa. Orthogonal orthomorphisms have been used
in construction of mutually orthogonal latin squares (MOLS). 
If we write the permutations of ${\cal P}$ as rows of a matrix,
we get a $|{\cal P}|\times n$ matrix over $\Zn$ with the property that the
difference of any two distinct rows is a permutation of $\Zn$. We point out
that such matrices have been used in connection with constructions of Latin
Squares and Orthogonal Arrays (cf. \cite[Chapter 22]{vlint}). Well known
bounds for mutually orthogonal orthomorphisms yield the following lemma.
\begin{lemma}
For $n\geq 2$, we have,
\begin{enumerate}[{\rm (a)}]
\item $f(n)\leq n-1$,
\item $f(n)=1$, when $n$ is an even number,
\item $f(n)=n-1$, when $n$ is a prime number.
\end{enumerate}
\end{lemma}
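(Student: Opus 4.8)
The plan is to handle the parts in the order (b), (a), (c), since the upper bound of part (a) is what turns the inequalities of (b) and (c) into equalities. Throughout I view each permutation $\sigma$ both as a tuple and as a bijection $\sigma:\Zn\to\Zn$, so that $\sigma-\tau$ is $x\mapsto\sigma(x)-\tau(x)$, and I use observations (a),(b) from the proof of Lemma~\ref{lem:lb}: a translate $c+\sigma$ of a permutation is a permutation, while $c.\sigma$ is a permutation exactly when $c\in\Zn^{\times}$. I will also use that property $\mathbf{(P3)}$ is preserved under right composition $\sigma\mapsto\sigma\circ\pi$ by a fixed permutation and under independent value-translation $\sigma\mapsto\sigma-c$, since in both cases the pairwise differences are merely permuted or translated and hence remain permutations. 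For part (b), note that $-\tau$ is a permutation whenever $\tau$ is (negation is a bijection of $\Zn$), and $\sigma-\tau=\sigma+(-\tau)$; when $n$ is even, Lemma~\ref{lem:evencase} says a sum of two permutations is never a permutation, so $\sigma-\tau$ is never a permutation. Hence no two distinct permutations can coexist in a $\mathbf{(P3)}$ family, and $f(n)=1$, the singleton family satisfying $\mathbf{(P3)}$ vacuously.

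For the upper bound $f(n)\le n-1$ in part (a), let $\{\sigma_1,\dots,\sigma_m\}$ satisfy $\mathbf{(P3)}$. Replacing each $\sigma_i$ by $\sigma_i-\sigma_i(0)$ keeps the family admissible and of the same size --- distinctness survives because two coinciding normalized permutations would force the original difference $\sigma_i-\sigma_j$ to be a constant tuple, which is not a permutation for $n\ge2$ --- so I may assume $\sigma_i(0)=0$ for all $i$. I then examine the second coordinates $\sigma_i(1)$: each is nonzero since $\sigma_i$ is injective with $\sigma_i(0)=0$, and for $i\neq j$ the permutation $\sigma_i-\sigma_j$ already takes the value $0$ at coordinate $0$, so it is nonzero at coordinate $1$, giving $\sigma_i(1)\neq\sigma_j(1)$. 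The $m$ values $\sigma_i(1)$ are therefore distinct elements of $\{1,\dots,n-1\}$, whence $m\le n-1$. Via the correspondence recalled just before the lemma, this is precisely the well-known fact that $\Zn$ admits at most $n-2$ mutually orthogonal orthomorphisms.

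Part (c) follows by meeting this bound with a construction valid when $n$ is prime, where $\Zn$ is a field. Taking $\iota=(0,1,\dots,n-1)$ and $\mathcal{P}=\{c.\iota:c\in\Zn^{\times}\}$, each $c.\iota$ is a permutation by observation (a), and for distinct $c,d$ we have $c.\iota-d.\iota=(c-d).\iota$, again a permutation since $c-d\in\Zn^{\times}$ in a field. This is a $\mathbf{(P3)}$ family of size $n-1$, so $f(n)\ge n-1$, and with part (a) we conclude $f(n)=n-1$. The one genuinely delicate point is the upper bound of part (a): the easy consequence of $\mathbf{(P3)}$ that two permutations agree in exactly one coordinate is too weak (a Gram-matrix count yields only $m\le(n-1)^2+1$), so one must exploit the full strength that every difference is a permutation. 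The normalization $\sigma_i\mapsto\sigma_i-\sigma_i(0)$ is the crucial move, after which the values at one fixed coordinate are forced to be distinct and to avoid $0$; the rest is bookkeeping.
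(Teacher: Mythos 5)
Your proof is correct, but it is genuinely different from what the paper does: the paper gives no argument at all for this lemma, simply asserting that ``well known bounds for mutually orthogonal orthomorphisms yield'' it, with pointers to the orthomorphism and MOLS literature (\cite{evan1}, \cite{vlint}). You instead supply a self-contained elementary proof. Part (b) via $\sigma-\tau=\sigma+(-\tau)$ and Lemma \ref{lem:evencase} is clean and checks out (that lemma's proof never uses distinctness of the two permutations, so it applies to $\sigma$ and $-\tau$). Your part (a) is the standard normalization argument transplanted from the theory of MOLS: translating each $\sigma_i$ so that $\sigma_i(0)=0$ preserves $\mathbf{(P3)}$ and distinctness, after which the values $\sigma_i(1)$ are forced to be pairwise distinct and nonzero, giving $m\leq n-1$; this is exactly the content of the classical bound of $n-2$ on mutually orthogonal orthomorphisms of a group of order $n$, but you prove it rather than cite it. Your part (c) construction $\{c.\iota : c\in\Zn^{\times}\}$ is the usual field construction and is correct since $c-d$ is invertible for distinct $c,d$ when $n$ is prime. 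What your approach buys is that the lemma no longer rests on external references; what the paper's approach buys is brevity and an explicit link to the orthomorphism literature, which is the point of that section. Both are sound.
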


\begin{qn}{\rm
Determine $f(n)$ for odd composite number $n$.
}
\end{qn}
From results in \cite{evan1} if follows that for an odd number $n>3$ and $n$ not
divisible by $9$, we have $f(n)\geq 3$.

\end{document}